\newtheorem{theorem}{Theorem}[section]
\newtheorem{corollary}[theorem]{Corollary}
\newtheorem{proposition}[theorem]{Proposition}
\newtheorem{fact}[theorem]{Fact}
\theoremstyle{definition}
\theoremstyle{remark}
\newtheorem{claim}{Claim}
\newcommand\N{\mathbb{N}}
\newcommand\Z{\mathbb{Z}}
\newcommand\T{\mathbb{T}}
\newcommand\cont{\mathfrak{c}}
\newcommand\BS{\Z^\N}
\newcommand\B{X}
\newcommand\Hom{\mathrm{Hom}}
\newcommand\G{\Hom(\B,\Z)}
\newcommand\du[1]{\widehat{#1}}
\newcommand\dual[1]{\widehat{#1}}
\title{A countable free closed non-reflexive subgroup of $\Z^\cont$}
\author[M. Ferrer]{Maria Vincenta Ferrer}
\address[Maria Vincenta Ferrer]{Universitat Jaume I, Instituto de Matem\'aticas de Castell\'on,
Campus de Riu Sec, 12071 Castell\'{o}n, Spain}
\email{mferrer@mat.uji.es}
\author[S. Hern\'andez]{Salvador Hern\'andez}
\address[Salvador Hern\'andez]{Universitat Jaume I, INIT and Departamento de Matem\'{a}ticas,
Campus de Riu Sec, 12071 Castell\'{o}n, Spain}
\email{hernande@mat.uji.es}
\author[D. Shakhmatov]{Dmitri Shakhmatov}
\address[Dmitri Shakhmatov]{Division of Mathematics, Physics and Earth Sciences\\
Graduate School of Science and Engineering\\
Ehime University, Matsuyama 790-8577, Japan}
\email{dmitri.shakhmatov@ehime-u.ac.jp}
\keywords{Pontryagin duality, reflexive group, Baer-Specker group, integer-valued homomorphism group, prodiscrete group, compact set}
\thanks{{\em MSC\/}: 22A25 (Primary); 20C15, 20K30, 22A05, 54B10, 54D30, 54H11 (Secondary)}
\thanks{The first and second listed authors acknowledge partial support by the Generalitat Valenciana,
grant code: PROMETEO II/2014/062; and by Universitat Jaume I,
grant P1$\cdot$1B2015-77}
\thanks{The third listed author was partially supported by the Grant-in-Aid for Scientific Research~(C) No.~26400091 by the Japan Society for the Promotion of Science (JSPS)}
\begin{document}
\begin{abstract}
We prove that the group $G=\Hom(\BS, \Z)$ of all homomorphisms from the Baer-Specker group $\BS$ to the group $\Z$ of integer numbers endowed with the topology of pointwise convergence contains no infinite compact subsets.
We deduce from this fact that the second Pontryagin dual of $G$ 
is discrete. As $G$ is non-discrete, it is not reflexive. Since 
$G$ can be viewed as a closed subgroup of the Tychonoff product $\Z^\cont$ of continuum many copies of the integers $\Z$, this provides 
an example of a group described in the title, thereby answering Problem 11 from
[J.~Galindo, L.~Recorder-N\'u\~{n}ez, M.~Tkachenko, 
{\em Reflexivity of prodiscrete topological groups\/}, J. Math. Anal. Appl. 384 (2011), 320--330]. 
It follows that an inverse limit of finitely generated (torsion-)free discrete abelian groups
need not be reflexive.
\end{abstract}
\maketitle

\section{Introduction}

The circle group $\T$ is the quotient of the real numbers over the subgroup 
of integer numbers. (Alternatively, one can view $\T$ as the unit circle in the complex plane with multiplication as a group operation.)
A {\em character\/} of a topological group $G$ is a continuous group homomorphism
from $G$ to the circle group $\T$. The set of all characters of an abelian topological group $G$ forms an abelian group $\du{G}$ with pointwise addition as its group operation. When equipped with the compact-open topology, $\du{G}$ becomes a topological group called the (Pontryagin) {\em dual group\/} of $G$. 

We say that an abelian  topological group $G$ is {\em reflexive\/} provided that the second Pontryagin dual $\du{\du{G}}$ of $G$ is topologically isomorphic to $G$. The celebrated theorem of Pontryagin-van Kampen says that all locally compact abelian groups are reflexive.  
Understanding the duality features of large classes of topological abelian groups beyond locally compact ones is a major topic of the topological group theory.

Extending the scope of the Pontryagin-van Kampen theorem, Banasczyk \cite{Ban} introduced 
the class of nuclear groups and investigated its duality properties.
The class of nuclear groups contains all locally compact abelian groups 
and is closed under taking products, subgroups and quotients \cite[7.5 and 7.10]{Ban}. It follows from this that every prodiscrete abelian group is nuclear.
(Recall that a topological group is {\em prodiscrete\/} if it is a closed subgroup of some product of discrete groups.)

Hofmann and Morris \cite[Problem 5.3]{HM-TP} propose to develop both a structure and a character theory of prodiscrete groups. (Recently, they re-phrased 
this problem 
in \cite[Question 1]{HM-Axioms} asking if there exists a satisfactory structure theory even for abelian non-discrete prodiscrete groups.)
In item (a) of \cite[Problem 5.3]{HM-TP}
the special case of compact-free prodiscrete groups is considered, and as a typical example, the group $G=\Hom(\BS,\Z)$ is proposed, and it is explicitly mentioned that the dual and bi-dual of this group has not been investigated.
We show that the 
bi-dual $\du{\du{G}}$ of $G$ is the group itself endowed with the discrete topology. Since $G$ is non-discrete, it follows that $G$ is non-reflexive.

Galindo, Recorder-N\'u\~{n}ez and Tkachenko \cite{GRT} found many examples of non-reflexive prodiscrete groups. Indeed, they show that any bounded 
torsion abelian group of cardinality $\cont$ admits a Hausdorff group topology making it into a prodiscrete non-reflexive group of countable pseudocharacter.
(Recall that a topological group $G$ has {\em countable pseudocharacter\/} if its identity is an intersection of a countable family of open subsets of $G$.) 
In \cite[Problem 11]{GRT}, Galindo, Recorder-N\'u\~{n}ez and Tkachenko
ask if 
$\Z^\cont$ contains a closed subgroup of countable pseudocharacter which is not  reflexive.

Note that
$G=\Hom(\BS,\Z)$
can be viewed as a closed subgroup of $\Z^\cont$. Furthermore, it is well known that $G$ is algebraically isomorphic to the free abelian group with countably many generators (see Fact \ref{G:is:free}); in particular, $G$ is countable. 
Since countable groups have countable pseudocharacter, our example $G$ gives a strong positive answer to the problem of Galindo, Recorder-N\'u\~{n}ez and Tkachenko.

Banasczyk \cite[Remark 17.14]{Ban}  says that it remains an open question whether closed subgroups or Hausdorff quotients  of uncountable products of $\mathbb{R}$'s and $\Z$'s are reflexive. 
Answering half of this question, Au\ss enhofer \cite{Aussenhofer} found a non-reflexive Hausdorff quotient of
$\Z^\cont$. We answer another half of this question by 
noticing that $G=\Hom(\BS,\Z)$
is a (countable)
closed non-reflexive subgroup of $\Z^\cont$.

Negrepontis \cite[Corollary 4.8]{Neg} claims that direct and inverse limits of compactly generated locally compact abelian groups are reflexive.
Banasczyk \cite[Remark 17.14]{Ban} points out an error in his proof.
Finally, our counter-example shows that not only the proof but also the statement of the inverse limit theorem from \cite{Neg} is wrong. Indeed, 
our closed
non-reflexive 
subgroup 
$G$
of 
$\Z^\cont$ is an inverse limit of discrete (thus, locally compact) finitely generated (thus, compactly generated) free abelian groups; see Corollary \ref{3.7}.

\section{The Baer-Specker group and the group of its integer-valued homomorphisms}

We denote by $\Z$ the group of integer numbers equipped with the discrete topology.
The group 
$$
\B=\BS
$$ 
is called 
the {\em Baer-Specker group\/} \cite{Baer,Specker}.
We consider 
the Baer-Specker group $\B=\BS$ with the Tychonoff product topology.

For abelian groups $A$ and $B$ we denote by $\Hom(A,B)$ the group of all homomorphisms from $A$ to $B$. 
Let 
$$
G=\G
$$ 
be the group of all group homomorphisms from the Baer-Specker group $\BS$ to the group $\Z$ of integer numbers. 
Clearly, $G$ can be viewed as a subgroup of the direct product $\Z^{\B}$.
One easily checks that $G$ is a closed subgroup of $\Z^{\B}$ when the latter group is equipped with the Tychonoff product topology.
This topology induces on $G$ the {\em topology of pointwise convergence\/}
which we consider in this paper.

\begin{fact}
\label{claim:Specker}
\label{aumatic:continuity}
Let $g\in G=\G$. Then:
\begin{itemize}
\item[(i)] there exist an integer $n\in\N$ and a homomorphism $g'\in\Hom(\Z^n,\Z)$ such that $g=g'\circ \pi_n$, where $\pi_n:\Z^\N\to \Z^n$ is the projection on the first $n$ coordinates;
\item[(ii)] $g$ is continuous.
\end{itemize}
\end{fact}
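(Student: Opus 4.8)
The plan is to deduce (ii) from (i) and then prove (i) directly. For (ii): if $g=g'\circ\pi_n$ with $g'\in\Hom(\Z^n,\Z)$, then $\pi_n$ is continuous, being a coordinate projection of the Tychonoff product $\Z^\N$, while $\Z^n$ is discrete as a finite power of the discrete group $\Z$; hence $g'$ is continuous (any map out of a discrete space is), and therefore so is the composition $g=g'\circ\pi_n$. Thus everything reduces to (i), which is a form of Specker's theorem that $\Z$ is slender.

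For (i), write $e_k\in\Z^\N$ for the $k$-th unit vector. The main step, and the point I expect to carry all the difficulty, is to show that $g(e_k)=0$ for all but finitely many $k$; I would prove this by a diagonalization exploiting that $\Z$ is countable. Suppose toward a contradiction that infinitely many $g(e_k)$ are nonzero, and pass to a subsequence $k_1<k_2<\cdots$ with $g(e_{k_i})\neq 0$ for all $i$. Define positive integers recursively by $a_1=1$ and $a_{i+1}=a_i\,(|g(e_{k_i})|+1)$, so that $a_i\mid a_{i+1}$ and $a_{i+1}/a_i>|g(e_{k_i})|$. For $S\subseteq\N$ put $x_S=\sum_{i\in S}a_ie_{k_i}\in\Z^\N$. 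Since there are $\cont$ subsets of $\N$ but only countably many integers, $g(x_S)=g(x_T)$ for some $S\neq T$, so $g(x_S-x_T)=0$. Letting $m=\min(S\triangle T)$, we may write $x_S-x_T=\pm a_m e_{k_m}+a_{m+1}r$ with $r\in\Z^\N$ (because $a_{m+1}$ divides $a_i$ for every $i>m$); applying $g$ gives $0=\pm a_m g(e_{k_m})+a_{m+1}g(r)$, hence $a_{m+1}\mid a_m g(e_{k_m})$, i.e.\ $(|g(e_{k_m})|+1)\mid g(e_{k_m})$, which is impossible for a nonzero integer.

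Once $n$ is chosen so that $g(e_k)=0$ for all $k>n$, put $g'(y_1,\dots,y_n)=\sum_{k=1}^n g(e_k)\,y_k\in\Hom(\Z^n,\Z)$ and $\phi=g-g'\circ\pi_n\in\Hom(\Z^\N,\Z)$; a direct check gives $\phi(e_k)=0$ for every $k$, so $\phi$ vanishes on the direct sum $\bigoplus_k\Z\subseteq\Z^\N$. It remains to upgrade this to $\phi\equiv 0$. For a prime $p$, let $H_p$ denote the set of all $x\in\Z^\N$ with $p^k\mid x_k$ for every $k$. Given $x\in H_p$ and $m\in\N$, discarding the first $m$ coordinates of $x$ expresses it as a finitely supported vector (killed by $\phi$) plus $p^{m+1}$ times an element of $\Z^\N$, so $p^{m+1}\mid\phi(x)$ for every $m$, whence $\phi(x)=0$; thus $\phi$ vanishes on $H_p$. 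Finally $2^k\Z+3^k\Z=\Z$ for each $k$, so every $x\in\Z^\N$ splits coordinatewise as $x=u+v$ with $u\in H_2$ and $v\in H_3$, giving $\phi(x)=\phi(u)+\phi(v)=0$. Therefore $g=g'\circ\pi_n$, which establishes (i) and with it (ii). The only genuinely nontrivial ingredient is the countability-versus-continuum diagonalization in the second paragraph; the remaining steps are bookkeeping.
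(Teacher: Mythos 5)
Your proof is correct, but it takes a different route from the paper on item (i): the paper simply cites Specker's original article for the factorization $g = g'\circ\pi_n$ and only writes out the easy deduction of (ii) from (i) (which you do identically). You instead give a complete, self-contained proof of Specker's theorem, and it checks out. The cardinality pigeonhole on the elements $x_S$ correctly forces $(|g(e_{k_m})|+1)\mid g(e_{k_m})$ for the minimal index $m$ of $S\triangle T$, which is indeed impossible for a nonzero integer; and the second stage, showing that a homomorphism vanishing on $\bigoplus_k\Z$ vanishes on all of $\Z^\N$ via the divisibility subgroups $H_2$ and $H_3$ and the coordinatewise splitting $2^k\Z+3^k\Z=\Z$, is the classical argument and is complete. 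What your approach buys is independence from the literature at the cost of about a page of work; what the paper's approach buys is brevity, since Fact~\ref{claim:Specker}(i) is a standard result (it also appears as 1.3 and 2.4 in Chapter III of the Eklof--Mekler reference cited for Fact~\ref{G:is:free}). Your argument would serve as a valid replacement for the citation.
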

\begin{proof}
The proof of item (i) can be found in \cite{Specker}.

Item (ii) easily follows from item (i). Indeed,
let $n$ and $g'$ be as in item (i).
Since 
$\Z^n$ is discrete, 
$g'$ is continuous. Since $\pi_n$ is also continuous,
the continuity of $g$ follows. 

Another proof of item (ii) 
can be found in \cite{Nunke}. 
\end{proof}

The following fact is well known; see  for example, 1.3 and 2.4 in Chapter III of \cite{EM}. It also follows easily from Fact \ref{claim:Specker}~(i).

\begin{fact}
\label{G:is:free}
For every $n\in\N$, let $g_n: \BS\to \Z$ denote the projection on $n$th coordinate defined by $g_n(x)=x(n)$ for $x\in \BS$.
Then $\{g_n:n\in \N\}$ is an independent set of generators of $G$; 
in particular, $G$ is the free abelian group with countably many generators.
\end{fact}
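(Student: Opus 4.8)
The plan is to leverage Fact~\ref{claim:Specker}(i): since every $g\in G=\G$ factors as $g=g'\circ\pi_n$ through the projection $\pi_n\colon\Z^\N\to\Z^n$ onto finitely many coordinates, the problem reduces to the completely transparent structure of $\Hom(\Z^n,\Z)$. For a fixed $n\in\N$ and $1\le i\le n$, let $p_i\colon\Z^n\to\Z$ denote the $i$th coordinate functional. It is well known that $\Hom(\Z^n,\Z)$ is the free abelian group on $\{p_1,\dots,p_n\}$ (indeed $\Hom(\Z^n,\Z)\cong\Z^n$, a homomorphism being determined by its values on the standard generators of $\Z^n$), and one checks directly from the definitions that $p_i\circ\pi_n=g_i$ for every such $i$.

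To see that $\{g_n:n\in\N\}$ generates $G$, take an arbitrary $g\in G$ and choose, via Fact~\ref{claim:Specker}(i), an integer $n\in\N$ and $g'\in\Hom(\Z^n,\Z)$ with $g=g'\circ\pi_n$. Writing $g'=\sum_{i=1}^{n}a_i p_i$ with $a_i\in\Z$ and composing with $\pi_n$ gives $g=\sum_{i=1}^{n}a_i(p_i\circ\pi_n)=\sum_{i=1}^{n}a_i g_i\in\grp{g_1,\dots,g_n}$. Hence $G=\grp{g_n:n\in\N}$.

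For independence, suppose $\sum_{i\in F}a_i g_i=0$ in $G$ for some finite $F\subseteq\N$ and integers $a_i$. For each $j\in F$ let $e_j\in\BS$ be the sequence with $e_j(j)=1$ and $e_j(i)=0$ for $i\ne j$. Evaluating the relation at $e_j$ yields $a_j=\sum_{i\in F}a_i e_j(i)=\bigl(\sum_{i\in F}a_i g_i\bigr)(e_j)=0$; since $j\in F$ was arbitrary, all $a_i$ vanish. Thus $\{g_n:n\in\N\}$ is independent, and together with the previous paragraph this shows it is a basis of $G$, so $G$ is free abelian on countably many generators.

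I do not expect a real obstacle: once Fact~\ref{claim:Specker}(i) (Specker's theorem) is available, both the spanning and the independence arguments are routine linear algebra over $\Z$. The only point deserving a moment's care is keeping the two kinds of maps apart — the finitary coordinate functionals $p_i$ on $\Z^n$ versus the coordinate homomorphisms $g_i$ on $\BS$ — and recording the compatibility $p_i\circ\pi_n=g_i$, which is immediate.
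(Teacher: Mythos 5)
Your argument is correct and is precisely the elaboration of the route the paper itself indicates (the paper gives no written-out proof, only the remark that the statement "follows easily from Fact~\ref{claim:Specker}~(i)" together with a citation): spanning via Specker's factorization $g=g'\circ\pi_n$ and independence by evaluating at the standard unit sequences. Nothing is missing.
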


The next proposition is probably known. We present its proof for the reader's convenience only.

\begin{proposition}
\label{G:is:not:discrete}
$G$ is non-discrete.
\end{proposition}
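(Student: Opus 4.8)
The plan is to show directly that the singleton $\{0\}$ is not open in $G$; this suffices, since a topological group is discrete precisely when its identity element is isolated. First I would spell out a base of neighbourhoods of $0$ for the topology of pointwise convergence. Since $G$ carries the subspace topology inherited from $\Z^{\B}$ with $\Z$ discrete, the sets
$$
V(x_1,\dots,x_k)=\{\,g\in G: g(x_1)=\dots=g(x_k)=0\,\},\qquad x_1,\dots,x_k\in\B,\ k\in\N,
$$
form such a base. Hence it is enough to produce, for every choice of points $x_1,\dots,x_k\in\B$, a nonzero $g\in G$ lying in $V(x_1,\dots,x_k)$.

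For the construction I would search for $g$ among the ``short'' homomorphisms, namely the $\Z$-linear combinations $g=\sum_{n=1}^{k+1}a_n g_n$ of the first $k+1$ coordinate projections $g_1,\dots,g_{k+1}$ from Fact \ref{G:is:free}, with coefficients $a_1,\dots,a_{k+1}\in\Z$. For such $g$ one has $g(x)=\sum_{n=1}^{k+1}a_n x(n)$ for all $x\in\B$, so the condition $g\in V(x_1,\dots,x_k)$ amounts to the homogeneous linear system $\sum_{n=1}^{k+1}a_n x_i(n)=0$, $i=1,\dots,k$, consisting of $k$ equations in the $k+1$ unknowns $a_1,\dots,a_{k+1}$. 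Such a system has a nontrivial solution over $\mathbb{Q}$; clearing denominators yields a nontrivial solution $(a_1,\dots,a_{k+1})\in\Z^{k+1}\setminus\{0\}$. Because $\{g_n:n\in\N\}$ is an independent generating set of $G$ by Fact \ref{G:is:free}, the resulting $g=\sum_{n=1}^{k+1}a_n g_n$ is a nonzero element of $G$, and it lies in $V(x_1,\dots,x_k)$ by construction. Therefore no basic neighbourhood of $0$ reduces to $\{0\}$, so $\{0\}$ is not open and $G$ is non-discrete.

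There is essentially no serious obstacle here; the only points requiring care are the correct description of the neighbourhood base of $0$ (immediate from the definition of the Tychonoff topology on $\Z^{\B}$) and the elementary observation that a nontrivial rational solution of a homogeneous system with integer coefficients can be rescaled to a nontrivial integer one. One could alternatively argue non-constructively: $V(x_1,\dots,x_k)$ is the kernel of the homomorphism $\varphi: G\to\Z^k$ given by $\varphi(g)=(g(x_1),\dots,g(x_k))$, and since $G$ is free of countably infinite rank by Fact \ref{G:is:free}, it cannot embed into $\Z^k$; hence $\ker\varphi\neq\{0\}$. Either route delivers the required nonzero element of $V(x_1,\dots,x_k)$.
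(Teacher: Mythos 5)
Your proof is correct, but it follows a genuinely different route from the paper's. The paper also reduces the problem to showing that each basic neighbourhood $U_Y=\{g\in G: g(y)=0 \mbox{ for all } y\in Y\}$ ($Y\subseteq\B$ finite) contains a nonzero element, but it produces that element by invoking a structural fact about the Baer--Specker group: every finite subset of $\B$ is contained in a finitely generated subgroup $H$ that splits off as a direct summand, $\B=H\oplus N$ (citing Fuchs), after which one takes $g$ to vanish on $H$ and restrict to a nontrivial homomorphism on $N\neq\{0\}$. You instead stay inside the span of the first $k+1$ coordinate projections $g_1,\dots,g_{k+1}$ and observe that the membership conditions $g(x_i)=0$ become a homogeneous system of $k$ integer linear equations in $k+1$ unknowns, which has a nontrivial integer solution after clearing denominators; independence of the $g_n$ (Fact \ref{G:is:free}) then guarantees $g\neq 0$. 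Your argument is more elementary and self-contained --- it needs only Fact \ref{G:is:free} and a dimension count, and sidesteps the direct-summand theorem for $\Z^\N$ --- while the paper's argument is shorter modulo the cited structural result. Your non-constructive variant (the kernel of $\varphi:G\to\Z^k$ must be nontrivial because a free group of infinite rank cannot embed in $\Z^k$) is also valid and is essentially the same dimension count in disguise. All steps check out.
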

\begin{proof}
Let $Y$ be a finite subset of $\B$. Consider a basic open neighbourhood 
\begin{equation}
\label{U_Y}
U_Y=\{g\in G: g(y)=0
\mbox{ for all }
y\in Y\}
\end{equation}
of $0$ in $G$. It suffices to show that $U_Y\not=\{0\}$.
By the well-known property of the Baer-Specker group, there exists a 
finitely generated subgroup $H$ of $X$ containing $Y$ 
and a subgroup $N$ of $X$ such that $X=H\oplus N$; see, for example, \cite[proof of (19.2)]{Fuchs}.
Since $X$ is not finitely generated, $N\not=\{0\}$.
Therefore, there exists a non-trivial homomorphism $f: N\to\Z$.
(One can take as $f$ the restriction to $N$ of a suitable projection from $\B=\Z^\N$ to $\Z$.)
Now let $g\in G$ be such that that $g\restriction_H=0$ and $g\restriction_N=f$.
Since $Y\subseteq H$, one has $g\in U_Y$. On the other hand, $g\not=0$ as $g\restriction_N=f\not=0$.
\end{proof}

\section{Results}

Our principal result is the following

\begin{theorem}
\label{main:theorem}
All compact subsets of the group $G=\G$ are finite.
\end{theorem}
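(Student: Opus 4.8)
The plan is to prove the contrapositive-style reformulation: \emph{every infinite subset $K$ of $G$ fails to be compact}. For each $x\in\B$ the evaluation map $\mathrm{ev}_x\colon G\to\Z$, $g\mapsto g(x)$, is continuous by the very definition of the topology of pointwise convergence, and $\Z$ is discrete; hence if $K$ were compact, then $\mathrm{ev}_x(K)=\{g(x):g\in K\}$ would be a compact, hence finite, subset of $\Z$ for every $x\in\B$. Thus it suffices to produce, for a given infinite $K\subseteq G$, a single point $x\in\B$ for which $\{g(x):g\in K\}$ is infinite.

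By Fact \ref{G:is:free} I identify $G$ with the group of finitely supported integer sequences: each $g\in G$ satisfies $g(x)=\sum_{i}a_ix(i)$ with $a_i=g(e_i)$, where $e_i\in\B$ denotes the $i$-th unit vector and the sum is finite. Fix an infinite $K\subseteq G$. If some coordinate is unbounded on $K$, that is, $\{g(e_i):g\in K\}$ is infinite for some $i$, then $x=e_i$ already works. So assume $\{g(e_i):g\in K\}$ is finite for every $i\in\N$. Since $K$ is infinite while each coordinate of its members ranges over a finite set, the quantities $m(g):=\max\{i:g(e_i)\neq 0\}$ cannot be bounded on $K$; pick distinct $h_1,h_2,\dots\in K$ with $n_j:=m(h_j)$ strictly increasing, and put $a^j_i:=h_j(e_i)$, so that $a^j_{n_j}\neq 0$ for every $j$.

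The core of the argument is to build $x\in\B$ recursively. Set $x(i)=0$ whenever $i$ is not of the form $n_k$, and choose the values $c_k:=x(n_k)\in\Z$ one at a time. When $c_1,\dots,c_{j-1}$ have been fixed, the homomorphism $h_j$ — and therefore the integer $t_j:=\sum_{k<j}a^j_{n_k}c_k$ — is already determined; using $a^j_{n_j}\neq 0$ we may then pick $c_j$ with $|c_j|$ large and with sign chosen to avoid cancellation, so that $|a^j_{n_j}c_j+t_j|\geq j$. Since $(n_k)$ is strictly increasing, $n_k\le n_j$ holds exactly when $k\le j$, so $h_j(x)=\sum_{k\le j}a^j_{n_k}c_k=a^j_{n_j}c_j+t_j$, whence $|h_j(x)|\ge j$ for all $j$. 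Therefore $\{g(x):g\in K\}$ contains the infinite set $\{h_j(x):j\in\N\}$, which completes the proof. (This also shows, slightly more, that every relatively compact subset of $G$ is finite.)

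The one delicate point is the recursive choice of the $c_k$: the freshly added term $a^j_{n_j}c_j$ must dominate the accumulated tail $t_j$ even though we have no control over the signs of the coefficients $a^j_{n_k}$ for $k<j$; this is precisely why it is essential that at stage $j$ the homomorphism $h_j$, hence $t_j$, is fixed before $c_j$ is selected. The remaining ingredients — continuity of the evaluation maps, the reduction to coordinatewise bounded $K$, and the identification of $G$ with finitely supported sequences — are routine.
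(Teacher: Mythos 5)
Your proof is correct, and it takes a genuinely different route from the paper's. You use the concrete description of $G$ furnished by Specker's theorem (Fact~\ref{G:is:free}): each $g\in G$ is a finitely supported integer combination of the coordinate projections, so that $g(x)=\sum_i g(e_i)x(i)$. The reduction to finding a single $x\in\B$ with $\{g(x):g\in K\}$ infinite is sound (evaluations are continuous into the discrete group $\Z$, so they map compact sets to finite sets); the observation that an infinite, coordinatewise bounded $K$ must have unbounded supports is correct (otherwise $K$ would inject into a finite product of finite sets); and the sliding-hump recursion is carried out in the right order: at stage $j$ the tail $t_j=\sum_{k<j}a^j_{n_k}c_k$ is already determined before $c_j$ is chosen, and $a^j_{n_k}=0$ for $k>j$ because $n_k>n_j=m(h_j)$, so indeed $h_j(x)=a^j_{n_j}c_j+t_j$ and $|h_j(x)|\ge j$. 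The paper argues quite differently: it applies Namioka's joint-continuity theorem to the separately continuous evaluation map $\B\times K\to\Z$ to produce a point of joint continuity, deduces that every $g\in K$ vanishes on a tail subgroup $O_m=\{x\in\B: x(1)=\dots=x(m)=0\}$, and then maps $K$ continuously and injectively into a discrete subgroup of $\Z^\N$. Your argument is more elementary (no Baire-category machinery beyond Specker's theorem itself) and proves slightly more, namely that every infinite subset of $G$ is unbounded at some point of $\B$ and hence not even relatively compact; the paper's approach is softer and isolates the structural fact that compact sets annihilate a tail $O_m$, which is the form in which the result feeds into the computation of the bidual.
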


Theorem 
\ref{main:theorem} 
enables us to compute the second dual group $\du{\du{G}}$ of $G$.

\begin{corollary}
\label{second:dual}
$\du{\du{G}}$  is discrete and algebraically isomorphic to $G$.
\end{corollary}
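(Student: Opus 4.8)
The plan is to deduce the corollary from Theorem~\ref{main:theorem} in two moves: first obtain an explicit description of the dual group $\du{G}$, and then apply classical Pontryagin--van Kampen duality for compact groups to compute $\du{\du{G}}$.

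First I would observe that, by definition, $\du{G}$ carries the compact-open topology, whose neighbourhood base at $0$ consists of the sets $\{\chi\in\du{G}:\chi(K)\subseteq V\}$ with $K\subseteq G$ compact and $V$ a neighbourhood of $0$ in $\T$. Theorem~\ref{main:theorem} forces every such $K$ to be finite, so this base depends only on finitely many points of $G$ at a time; that is, the compact-open topology on $\du{G}$ coincides with the topology of pointwise convergence inherited from $\T^{G}$. Next, using Fact~\ref{G:is:free} that $\{g_n:n\in\N\}$ is a basis of the free abelian group $G$, I would show that the restriction map $r\colon\du{G}\to\T^{\N}$, $r(\chi)=(\chi(g_n))_{n\in\N}$, is a topological isomorphism. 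It is a bijective homomorphism because a homomorphism out of a free abelian group is freely and uniquely determined by its values on a basis; it is continuous because it is the restriction of a product of coordinate projections $\T^{G}\to\T$; and its inverse is continuous because, writing any $g\in G$ as a finite $\Z$-combination $g=\sum_{i\le k}m_ig_{n_i}$ of basis elements, the evaluation of $r^{-1}((t_n)_n)$ at $g$ equals $\sum_{i\le k}m_it_{n_i}$, which is continuous in $(t_n)_n\in\T^{\N}$.

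Having identified $\du{G}$ with the compact group $\T^{\N}$, I would finish by dualising once more. Since $\T^{\N}$ is compact, its dual group $\du{\du{G}}\cong\du{\T^{\N}}$ is discrete; and since the dual of a product of compact groups is the direct sum of the duals while $\du{\T}\cong\Z$, we get $\du{\du{G}}\cong\bigoplus_{n\in\N}\Z$. This last group is the free abelian group with countably many generators, and so is $G$ by Fact~\ref{G:is:free}; hence $\du{\du{G}}$ is discrete and algebraically isomorphic to $G$.

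The only step carrying any real content is the first one, where Theorem~\ref{main:theorem} is used to collapse the compact-open topology on $\du{G}$ to the pointwise topology; after that, recognising $\du{G}$ as $\T^{\N}$ and dualising are routine. I would also note, for the reader's benefit, that tracing the above identifications shows the canonical evaluation map $G\to\du{\du{G}}$ to be precisely this algebraic isomorphism, so that, combined with Proposition~\ref{G:is:not:discrete}, the group $G$ is non-reflexive; this, however, goes beyond what the corollary asks.
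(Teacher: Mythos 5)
Your first step is correct and matches the paper: Theorem~\ref{main:theorem} collapses the compact-open topology on $\du{G}$ to the topology of pointwise convergence. The proof breaks down at the second step, where you claim that $r\colon\du{G}\to\T^{\N}$, $r(\chi)=(\chi(g_n))_{n}$, is a \emph{bijection}. Your argument for surjectivity only uses that $G$ is algebraically free on $\{g_n:n\in\N\}$, which shows that the group of \emph{all} homomorphisms $\Hom(G,\T)$ is isomorphic to $\T^{\N}$; but $\du{G}$ consists only of the \emph{continuous} characters, and since $G$ is non-discrete (Proposition~\ref{G:is:not:discrete}) these form a proper subgroup. Concretely: the basic neighbourhoods $U_Y$ of $0$ in $G$ (as in \eqref{U_Y}) are open subgroups, and since $\T$ has no small subgroups, any continuous $\chi\in\du{G}$ must kill some $U_Y$ and hence factor through $G/U_Y$, which embeds in $\Z^{Y}$ and so has finite rank $r$. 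Consequently some nontrivial relation $\sum_{n\le r+1}m_ng_n\in U_Y$ forces $\sum_{n\le r+1}m_n\chi(g_n)=0$ in $\T$. A character with $1,\chi(g_1),\chi(g_2),\dots$ linearly independent over $\mathbb{Q}$ therefore cannot be continuous, so $r(\du{G})$ is a proper (dense) subgroup of $\T^{\N}$. In particular $\du{G}$ is precompact and metrizable but \emph{not} compact, and the appeal to "the dual of a compact group is discrete" and to $\du{\T^{\N}}\cong\bigoplus_{\N}\Z$ is unavailable.

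The paper's proof has to work harder precisely because of this. From metrizability of $\du{G}$ it deduces that $\du{\du{G}}$ is a $k$-space (citing Chasco and Au\ss enhofer); from the Galindo--Recorder-N\'u\~nez--Tkachenko theorem that the evaluation map $\alpha_G$ of a prodiscrete group is an open algebraic isomorphism it gets both the algebraic isomorphism $\du{\du{G}}\cong G$ and, via the continuous isomorphism $\alpha_G^{-1}$, the fact that $\du{\du{G}}$ has no infinite compact subsets; and a $k$-space without infinite compact subsets is discrete. If you want to salvage your approach you would need to describe $\du{G}$ as the actual (dense, non-closed) subgroup of $\T^{\N}$ of characters factoring through the quotients $G/U_Y$ and then compute its dual directly, which is essentially a different and harder computation than the one you wrote.
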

\begin{proof}
Since compact subsets of $G$ are finite by Theorem \ref{main:theorem}, the dual group 
$\dual{G}$ has the topology of pointwise convergence on $G$; that is, $\dual{G}$ can be 
identified with the subgroup of $\T^G$ endowed with the Tychonoff product topology. Since $G$ is countable by Fact \ref{G:is:free}, the dual group $\dual{G}$ of $G$ is (precompact and) metrizable. Therefore, the second dual $\dual{\dual{G}}$ of $G$ 
is a $k$-space; see \cite{Chasco, Au}.

Since $G$ is prodiscrete, the natural map $\alpha_G:G\to \du{\du{G}}$ from $G$ to its second dual $\du{\du{G}}$ is both open and an algebraic isomorphism between $G$ and  $\du{\du{G}}$; see \cite[Theorem 3.1(2)]{GRT}. Therefore, its inverse $\alpha_G^{-1}:\du{\du{G}}\to G$ is a continuous isomorphism from $\du{\du{G}}$ onto $G$. Since all compact subsets 
of $G$ are finite by Theorem \ref{main:theorem}, the same holds for $\du{\du{G}}$. Since $\du{\du{G}}$ is a $k$-space, it must be discrete.
\end{proof} 

Proposition \ref{G:is:not:discrete} and Corollary \ref{second:dual} imply 
the following
\begin{corollary}
$G$ is non-reflexive.
\end{corollary}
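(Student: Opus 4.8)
The plan is direct: the statement follows by combining the two results quoted immediately before it. According to the notion of reflexivity adopted in this paper, $G$ is reflexive exactly when $G$ is topologically isomorphic to its second Pontryagin dual $\du{\du{G}}$. Corollary \ref{second:dual} tells us that $\du{\du{G}}$ is discrete, whereas Proposition \ref{G:is:not:discrete} tells us that $G$ is not discrete. Since the property of being discrete is invariant under topological isomorphism, $G$ and $\du{\du{G}}$ cannot be topologically isomorphic, and therefore $G$ is not reflexive.

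The same conclusion can also be read off from the canonical evaluation map $\alpha_G\colon G\to\du{\du{G}}$. By the structure theorem of Galindo, Recorder-N\'u\~{n}ez and Tkachenko (\cite[Theorem 3.1(2)]{GRT}) already invoked in the proof of Corollary \ref{second:dual}, $\alpha_G$ is an algebraic isomorphism; were it moreover a homeomorphism, then $G$ would be discrete, being topologically isomorphic to the discrete group $\du{\du{G}}$, contradicting Proposition \ref{G:is:not:discrete}. Hence $\alpha_G$ is not a topological isomorphism, so $G$ is non-reflexive. I do not expect any real obstacle in this step: the whole substance of the argument is contained in Theorem \ref{main:theorem} and its Corollary \ref{second:dual}, and all that remains is the elementary remark that a non-discrete topological group cannot be topologically isomorphic to a discrete one.
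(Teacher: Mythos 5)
Your proposal is correct and matches the paper's argument exactly: the paper derives this corollary by combining Proposition \ref{G:is:not:discrete} with Corollary \ref{second:dual}, just as you do, since a non-discrete group cannot be topologically isomorphic to a discrete one. Your additional remark about $\alpha_G$ is a harmless elaboration of the same point.
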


From Fact \ref{G:is:free} and this corollary we get the following

\begin{corollary}
\label{cor:3.4}
$G$ is a non-reflexive closed free subgroup of $\Z^\B$ with countably many generators.
\end{corollary}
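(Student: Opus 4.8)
The plan is to assemble Corollary~\ref{cor:3.4} entirely from facts already established, treating it as a bookkeeping statement rather than a result requiring fresh argument. The corollary asserts four things about $G=\G$: that it is non-reflexive, that it is closed in $\Z^\B$, that it is free, and that it has countably many generators. Each of these is available verbatim from an earlier item, so the proof is a matter of citing the right source for each clause and checking that nothing needs to be reconciled.

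First I would dispatch non-reflexivity by citing the immediately preceding corollary, which states exactly that $G$ is non-reflexive; the sentence introducing Corollary~\ref{cor:3.4} already signals this dependence (``From Fact \ref{G:is:free} and this corollary''). Next, for the freeness and the countable generating set, I would invoke Fact~\ref{G:is:free}, which establishes that $\{g_n : n\in\N\}$ is an independent set of generators of $G$ and hence that $G$ is the free abelian group with countably many generators; this single fact covers two of the four clauses simultaneously. The remaining clause, that $G$ is a closed subgroup of $\Z^\B$, was verified in the expository paragraph opening Section~2, where it is observed that $G$ can be viewed as a subgroup of $\Z^\B$ and that one easily checks it is closed when $\Z^\B$ carries the Tychonoff product topology.

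There is essentially no obstacle here, which is the point: the corollary is a summary that packages the substantive content of the theorem and the surrounding facts into a single quotable statement. The only thing to be careful about is phrasing, so that the word ``free'' in the corollary is understood in the algebraic sense supplied by Fact~\ref{G:is:free}, and so that the closedness claim is attributed to the ambient product topology on $\Z^\B$ rather than to any subtler structure. I would therefore write the proof as a short sequence of three citations, one per clause, with a connecting remark that all four asserted properties have now been accounted for.
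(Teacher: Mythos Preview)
Your proposal is correct and matches the paper's approach exactly: the paper offers no separate proof for this corollary, deriving it simply from Fact~\ref{G:is:free} and the preceding corollary (non-reflexivity), with closedness in $\Z^\B$ already noted in the opening of Section~2. Your breakdown into the four clauses with their respective citations is precisely the intended justification.
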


Since $|\B|=|\BS|=\cont$, the topological groups $\Z^{\B}$ and $\Z^\cont$ are topologically isomorphic.
Thus, we obtain the following 
\begin{corollary}
\label{Z^c}
$\Z^\cont$ contains a countable closed non-reflexive subgroup. 
\end{corollary}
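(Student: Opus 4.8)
The plan is to transport the group $G$ furnished by Corollary~\ref{cor:3.4} across a topological isomorphism between $\Z^\B$ and $\Z^\cont$, and then verify that each of its three relevant features---countability, closedness, and non-reflexivity---survives the transport. First I would record the cardinality computation $|\B|=|\BS|=|\Z^\N|=\aleph_0^{\aleph_0}=\cont$. This yields a bijection $\varphi:\B\to\cont$ between the two index sets, which in turn induces a coordinate-permuting map
\[
\Phi:\Z^\B\to\Z^\cont,\qquad
\Phi\bigl((x_b)_{b\in\B}\bigr)=\bigl(x_{\varphi^{-1}(\gamma)}\bigr)_{\gamma<\cont}.
\]
This $\Phi$ is a group homomorphism, and it is a homeomorphism because permutations of coordinates are continuous with continuous inverse in the Tychonoff product topology; hence $\Phi$ is a topological isomorphism of $\Z^\B$ onto $\Z^\cont$.

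Next I would set $G'=\Phi(G)\subseteq\Z^\cont$ and check the three properties in turn. Countability is immediate: $\Phi$ restricts to a bijection $G\to G'$, and $G$ is countable by Fact~\ref{G:is:free}, so $|G'|=|G|=\aleph_0$. Closedness follows because $\Phi$ is a homeomorphism and $G$ is closed in $\Z^\B$ by Corollary~\ref{cor:3.4}; homeomorphisms carry closed sets to closed sets, so $G'=\Phi(G)$ is closed in $\Z^\cont$. Finally, non-reflexivity is preserved because the restriction $\Phi\restriction_G:G\to G'$ is itself a topological isomorphism, and reflexivity is a topological-group invariant.

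There is no serious obstacle here; the only point requiring a word of care is the invariance of reflexivity, which follows from the functoriality of the Pontryagin dual. A topological isomorphism $f:G\to G'$ induces a topological isomorphism $\du{f}:\du{G'}\to\du{G}$ via $\chi\mapsto\chi\circ f$, and iterating gives $\du{\du{f}}:\du{\du{G}}\to\du{\du{G'}}$ which is compatible with the canonical evaluation maps, so that $\du{\du{G'}}$ is topologically isomorphic to $G'$ precisely when $\du{\du{G}}$ is topologically isomorphic to $G$. Since $G$ is non-reflexive by Corollary~\ref{cor:3.4}, so is $G'$. Combining the three verifications shows that $G'$ is a countable closed non-reflexive subgroup of $\Z^\cont$, as required.
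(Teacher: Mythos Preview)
Your proposal is correct and follows exactly the same route as the paper: the paper simply observes that $|\B|=|\BS|=\cont$ makes $\Z^\B$ and $\Z^\cont$ topologically isomorphic, so Corollary~\ref{cor:3.4} transports over. You have merely spelled out in detail the construction of this isomorphism and the routine preservation of countability, closedness, and non-reflexivity that the paper leaves implicit.
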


In \cite[Problem 11]{GRT}, Galindo, Recorder-N\'u\~{n}ez and Tkachenko
ask if 
$\Z^\cont$ contains a closed subgroup of countable pseudocharacter which is not  reflexive.
Since countable topological groups have countable pseudocharacter,
this corollary provides a strong positive answer to 
this problem. 
Earlier, 
Pol and Smentek \cite[6.3]{PS} constructed a countable closed non-reflexive subgroup 
in the power $A^\cont$ of the free abelian group $A$ with countably many generators equipped with the discrete topology.

Recall that a topological abelian group $G$ is called {\em strongly reflexive\/} if all closed subgroups and all quotient groups of $G$ are reflexive; see, for example, \cite{CDMP}.

Corollary \ref{Z^c} implies the main result of Au\ss enhofer from \cite{Aussenhofer}:
\begin{corollary}
{\rm (\cite[Corollary 3.6]{Aussenhofer})}
$\Z^\cont$ is not strongly reflexive.
\end{corollary}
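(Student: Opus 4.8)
The plan is to derive this corollary immediately from Corollary \ref{Z^c} together with the definition of strong reflexivity recalled just above it. Strong reflexivity of a topological abelian group requires that \emph{every} closed subgroup (and every Hausdorff quotient) be reflexive; consequently, exhibiting a single closed subgroup that fails to be reflexive already suffices to defeat strong reflexivity. Since Corollary \ref{Z^c} furnishes precisely such a subgroup inside $\Z^\cont$, the conclusion should follow in one line.

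Concretely, I would first invoke Corollary \ref{Z^c} to fix a countable closed subgroup $G$ of $\Z^\cont$ that is non-reflexive. Recall that this subgroup is $G=\G$, embedded as a closed subgroup of $\Z^\cont$ through the canonical topological isomorphism $\Z^{\B}\cong\Z^\cont$ afforded by $|\B|=|\BS|=\cont$, and that its non-reflexivity was established via Theorem \ref{main:theorem} and Corollary \ref{second:dual}. I would then simply observe that the existence of this non-reflexive closed subgroup directly contradicts the defining requirement of strong reflexivity that all closed subgroups be reflexive. Hence $\Z^\cont$ is not strongly reflexive, which is exactly Au\ss enhofer's \cite[Corollary 3.6]{Aussenhofer}.

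The main obstacle does not reside in this deduction, which is a formality, but lies entirely upstream: all of the genuine content has already been absorbed into Theorem \ref{main:theorem}, asserting that every compact subset of $G$ is finite, and into the bidual computation of Corollary \ref{second:dual}. Once a non-reflexive closed subgroup of $\Z^\cont$ is in hand, the failure of strong reflexivity is immediate from the definition. I would therefore keep the proof to a single sentence, citing Corollary \ref{Z^c} and the notion of strong reflexivity, and leave all the real work to the preceding results.
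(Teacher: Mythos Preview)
Your proposal is correct and matches the paper's approach exactly: the paper does not even write out a proof, merely noting that Corollary \ref{Z^c} implies the result, which is precisely the one-line deduction from the definition of strong reflexivity that you describe.
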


Our last corollary provides a counter-example to \cite[Corollary 4.8]{Neg}.

\begin{corollary}
\label{3.7}
An inverse limit of finitely generated (torsion-)free discrete abelian groups
need not be reflexive.
\end{corollary}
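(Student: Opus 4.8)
The plan is to realise the group $G=\G$ itself as such an inverse limit; since $G$ is non-reflexive by Corollary~\ref{cor:3.4}, this already proves the statement.

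I would first fix the index set. Let $\mathcal{F}$ be the family of all finitely generated subgroups of the Baer--Specker group $\BS$, ordered by inclusion; it is directed because $F_1+F_2\in\mathcal{F}$ whenever $F_1,F_2\in\mathcal{F}$. Since $\BS$ is torsion-free, every $F\in\mathcal{F}$ is free of finite rank, so $\Hom(F,\Z)$ is a finitely generated free (in particular torsion-free) abelian group; equip each $\Hom(F,\Z)$ with the discrete topology. For $F\subseteq F'$ in $\mathcal{F}$, let $r^{F'}_F\colon\Hom(F',\Z)\to\Hom(F,\Z)$ be the restriction homomorphism. These maps are continuous (everything in sight is discrete) and turn $\{\Hom(F,\Z):F\in\mathcal{F}\}$ into an inverse system of finitely generated (torsion-)free discrete abelian groups, each of which is compactly generated and locally compact.

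Next I would show that $\varprojlim_{F\in\mathcal{F}}\Hom(F,\Z)$ is topologically isomorphic to $G$. A thread $(\phi_F)_{F\in\mathcal{F}}$ of the system satisfies $\phi_{F'}\restriction_F=\phi_F$ whenever $F\subseteq F'$; since any $x\in\BS$, as well as any pair $x,y\in\BS$, lies in a member of $\mathcal{F}$, the rule $x\mapsto\phi_{\grp{x}}(x)$ defines a homomorphism $\phi\colon\BS\to\Z$ with $\phi\restriction_F=\phi_F$ for every $F$, and $\phi\mapsto(\phi\restriction_F)_{F\in\mathcal{F}}$ is the inverse assignment; this yields an algebraic isomorphism onto $G$. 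For the topologies I would compare neighbourhood bases at the identity. A basic neighbourhood of $0$ in $\varprojlim_{F\in\mathcal{F}}\Hom(F,\Z)$ has, under this identification, the form $\{\phi\in G:\phi\restriction_{F_1}=0,\ldots,\phi\restriction_{F_m}=0\}=\{\phi\in G:\phi\restriction_F=0\}$ with $F=F_1+\cdots+F_m\in\mathcal{F}$, and choosing a finite generating set $Y$ of $F$ exhibits this set as the neighbourhood $U_Y$ of \eqref{U_Y}. Conversely, $U_Y=\{\phi\in G:\phi\restriction_{\grp{Y}}=0\}$ with $\grp{Y}\in\mathcal{F}$ is a basic neighbourhood of $0$ in the inverse limit. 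Hence the two topologies agree, and $G\cong\varprojlim_{F\in\mathcal{F}}\Hom(F,\Z)$ topologically.

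This realises the non-reflexive group $G$ as an inverse limit of finitely generated (torsion-)free discrete abelian groups, contradicting \cite[Corollary~4.8]{Neg} and proving the statement. There is no serious obstacle here; the one point requiring care is the choice of index set, which must be the full directed family $\mathcal{F}$ rather than a naive chain. For instance, the chain $\Z\subseteq\Z^2\subseteq\cdots$ of coordinate subgroups of $\BS$ under the coordinate inclusions has inverse limit $\Hom\!\left(\bigoplus_{n\in\N}\Z,\Z\right)\cong\BS$, not $G$, so it is essential that $\mathcal{F}$ exhausts $\BS$ in the appropriate non-sequential sense. Everything else is a routine verification.
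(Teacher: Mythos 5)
Your proof is correct, and it reaches the same goal as the paper --- realising the non-reflexive group $G=\G$ of Corollary~\ref{cor:3.4} as an inverse limit of finitely generated free discrete groups --- but via a different inverse system. The paper works inside $\Z^\B$: it indexes by the finite subsets $A$ of $\B$, takes the projections $G_A=\pi_A(G)\leq\Z^A$ with the induced bonding maps, and gets the topological identification $G\cong\varprojlim G_A$ for free from the general fact that a closed subgroup of a product is the inverse limit of its projections onto finite subproducts (\cite[Corollary 2.5.7]{En}); each $G_A$ is then free because it is a finitely generated subgroup of $\Z^A$. You instead dualize the presentation of $\BS$ as the directed union of its finitely generated subgroups $F$, taking the full groups $\Hom(F,\Z)$ with restriction maps, and verify the algebraic and topological identification $G\cong\varprojlim_{F}\Hom(F,\Z)$ by hand. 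Both are sound: your route is self-contained (no appeal to the Engelking result) and makes the ``$\Hom$ turns direct limits into inverse limits'' mechanism explicit, at the cost of checking the neighbourhood bases directly --- which you do correctly, the key points being that a homomorphism vanishing on $F_1,\dots,F_m$ vanishes on $F_1+\cdots+F_m$, and that $\{\phi\in G:\phi\restriction_{\grp{Y}}=0\}=U_Y$ for finite $Y$. Two small remarks: the groups $\Hom(F,\Z)$ in your system are generally larger than the images of the restriction maps from $G$ (a homomorphism $F\to\Z$ need not extend to $\BS$), but this is harmless since the inverse limit only sees compatible threads; and your cautionary example about the chain $\Z\subseteq\Z^2\subseteq\cdots$, whose inverse limit of duals is $\BS$ rather than $G$, is a correct and worthwhile observation explaining why the full directed family is needed.
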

\begin{proof}
Let $G$ be as in Corollary \ref{cor:3.4}.
For every finite set $A\subseteq X$ let $\pi_A: \Z^X\to \Z^A$ be the natural projection and let $G_A=\pi_A(G)$.
If $A$ and $B$ are finite subsets of $X$ such that $A\subseteq B$, then there exists a unique homomorphism $p^B_A: G_B\to G_A$ such that 
$\pi_A\restriction_G = p^B_A\circ \pi_A\restriction_G$.
Since $G$ is closed in $\Z^X$, it follows that $G$ is topologically isomorphic 
to the inverse limit of $G_A$'s with bounding maps $p^B_A$; see \cite[Corollary 2.5.7]{En}.
Clearly,
each $G_A$ is discrete as a subgroup of the discrete group $\Z^A$.
Since the latter group is torsion-free and finitely generated, so is $G_A$.
Since finitely generated torsion-free groups are free, each $G_A$ is free.
\end{proof}

\section{Proof of Theorem \ref{main:theorem}}

The proof of Theorem \ref{main:theorem} is split into a sequence of claims.

\begin{claim}
\label{continuity:of:evaluation}
For every $x\in \B$, the map $\varphi_x\in\Hom(G,\Z)$ defined by 
$\varphi_x(g)=g(x)$ for all $g\in G$, is continuous.
\end{claim}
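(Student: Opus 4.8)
The plan is to recognize $\varphi_x$ as a restriction of a coordinate projection. Recall that $G$ was introduced as a (closed) subgroup of the Tychonoff product $\Z^{\B}$, and that the topology we consider on $G$ is precisely the one inherited from this product, i.e.\ the topology of pointwise convergence. Let $p_x: \Z^{\B}\to\Z$ denote the canonical projection onto the coordinate indexed by $x\in\B$. By the definition of the product topology, $p_x$ is continuous, and since $\varphi_x=p_x\restriction_G$ while $G$ carries the subspace topology, the continuity of $\varphi_x$ follows at once.

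If one prefers a self-contained verification, it suffices to use that $\Z$ is discrete and that $\varphi_x$ is a group homomorphism: then $\varphi_x$ is continuous precisely when $\ker\varphi_x$ is a neighbourhood of $0$ in $G$. In the notation of \eqref{U_Y} one has $\ker\varphi_x=\{g\in G:g(x)=0\}=U_{\{x\}}$, and $U_{\{x\}}$ is a basic open neighbourhood of $0$ in $G$ because $\{x\}$ is a finite subset of $\B$. Hence $\varphi_x$ is continuous at $0$, and therefore everywhere.

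There is no genuine difficulty here: the claim is essentially an unwinding of the definition of the topology of pointwise convergence on $G$, and it is recorded separately only because the homomorphisms $\varphi_x$ will be used repeatedly in the remaining claims leading to the proof of Theorem~\ref{main:theorem}.
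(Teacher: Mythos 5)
Your proof is correct and is essentially the paper's own argument: the paper simply observes that $G$ carries the topology of pointwise convergence, which is exactly your identification of $\varphi_x$ with the restriction to $G$ of the coordinate projection $p_x:\Z^{\B}\to\Z$. The extra verification via $\ker\varphi_x=U_{\{x\}}$ is a fine (if redundant) unwinding of the same fact.
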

\begin{proof}
This follows from the fact that $G$ is equipped with the topology of pointwise convergence on $X$. 
\end{proof}

\begin{claim}
\label{separate:continuity}
The evaluation map $e: \B\times G\to\Z$ defined by $e(x,g)=g(x)$ for $x\in \B$ and $g\in G$, is separately continuous.
\end{claim}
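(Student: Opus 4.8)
The plan is to verify the two defining conditions of separate continuity independently: continuity of $e$ in the second variable when the first is held fixed, and continuity in the first variable when the second is held fixed.

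For the first condition, I would fix $x\in\B$ and observe that the partial map $g\mapsto e(x,g)=g(x)$ coincides with the evaluation homomorphism $\varphi_x\in\Hom(G,\Z)$ of Claim \ref{continuity:of:evaluation}. That claim already records that $\varphi_x$ is continuous, since $G$ carries the topology of pointwise convergence on $\B$ and $\varphi_x$ is precisely the coordinate projection of $G\subseteq\Z^\B$ at the point $x$.

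For the second condition, I would fix $g\in G$ and note that the partial map $x\mapsto e(x,g)=g(x)$ is nothing other than the homomorphism $g\colon\B\to\Z$ itself. By Fact \ref{claim:Specker}(ii), every element of $G$ is a continuous map from $\B$ to $\Z$; concretely, $g$ factors as $g'\circ\pi_n$ through a finite projection by Fact \ref{claim:Specker}(i), and $\pi_n$ together with the (automatically continuous) map $g'$ on the discrete group $\Z^n$ yields continuity of $g$. Hence this partial map is continuous as well.

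Since both partial maps are continuous for every choice of the fixed variable, $e$ is separately continuous. I do not expect any genuine obstacle: the only non-formal ingredient is Specker's theorem, already isolated in Fact \ref{claim:Specker}, and the statement merely repackages Claim \ref{continuity:of:evaluation} and Fact \ref{claim:Specker}(ii) in terms of the two-variable evaluation map $e$.
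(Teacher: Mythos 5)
Your proposal is correct and coincides with the paper's own argument: continuity in $g$ for fixed $x$ is exactly Claim \ref{continuity:of:evaluation}, and continuity in $x$ for fixed $g$ is Fact \ref{aumatic:continuity}~(ii). No further comment is needed.
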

\begin{proof}
For a fixed $x\in \B$, the map $g\mapsto g(x)$ coincides with $\varphi_x$, so it is continuous by Claim \ref{continuity:of:evaluation}.

For a fixed $g\in G$, the map $x\mapsto g(x)$ coincides with $g$, so its continuity follows from Fact
\ref{aumatic:continuity}~(ii).
\end{proof}

\begin{claim}
\label{claim:a}
For every non-empty compact subset $K$ of $G$, there exists $m\in\N$ (depending on $K$) such that $g(O_m)\subseteq \{0\}$ for all $g\in K$, where
\begin{equation}
\label{eq:O_m}
O_m=\{x\in \B: x(i)=0 \mbox{ for all } i=1,2,\dots,m\}.
\end{equation}
\end{claim}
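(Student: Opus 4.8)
The plan is to reformulate the claim using the free generators of $G$ provided by Fact~\ref{G:is:free} and then argue by contradiction via a sliding-hump construction. Write $G_m=\grp{g_1,\dots,g_m}$ for the subgroup of $G$ generated by the first $m$ coordinate projections. The first step is to verify, for $g\in G$, the equivalence $g(O_m)\subseteq\{0\}\iff g\in G_m$. One implication is immediate, since every $x\in O_m$ satisfies $g_k(x)=x(k)=0$ for $k\le m$; for the other, if the (unique, finite) expansion $g=\sum_k a_kg_k$ has $a_{k_0}\ne 0$ for some $k_0>m$, then the element of $\B$ equal to $1$ in coordinate $k_0$ and $0$ elsewhere belongs to $O_m$ and gives $g$-value $a_{k_0}\ne 0$. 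So the claim is equivalent to the assertion that every non-empty compact $K\subseteq G$ is contained in some $G_m$.

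Assume, towards a contradiction, that $K\not\subseteq G_m$ for every $m\in\N$. The objective is to construct a single $x\in\B$ for which the evaluation homomorphism $\varphi_x\colon G\to\Z$ from Claim~\ref{continuity:of:evaluation} takes unbounded values on $K$; this is impossible, because $\varphi_x$ is continuous and $K$ is compact, so $\varphi_x(K)$ would be a compact, hence finite, subset of the discrete group $\Z$. I would build $x$ by recursion on $r\in\N$, simultaneously producing a strictly increasing sequence $0=n_0<n_1<n_2<\cdots$ and elements $h_r\in K$. At stage $r$, with the values $x(1),\dots,x(n_{r-1})$ already fixed, the hypothesis $K\not\subseteq G_{n_{r-1}}$ lets us pick $h_r\in K\setminus G_{n_{r-1}}$; let $n_r$ be the largest index $k$ with $b_k\ne 0$ in the finite expansion $h_r=\sum_k b_kg_k$, so that $n_r>n_{r-1}$ and $b_{n_r}\ne 0$. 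Put $x(k)=0$ for $n_{r-1}<k<n_r$ and choose $x(n_r)\in\Z$ so large that $|b_{n_r}x(n_r)|>r+|S_r|$, where $S_r=\sum_{k\le n_{r-1}}b_kx(k)$ is already determined; this yields $|h_r(x)|=|S_r+b_{n_r}x(n_r)|>r$.

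Since $n_r\to\infty$, the construction defines $x$ on all of $\N$. The key point — and the place where care is needed — is the monotonicity of the construction: every subsequent stage assigns values only to coordinates strictly larger than $n_r$, while $h_r$ depends on the coordinates $1,\dots,n_r$ alone (Fact~\ref{claim:Specker}(i)), so $h_r(x)$ is never altered after stage $r$ and remains $>r$ in absolute value. Hence $\{h_r(x):r\in\N\}\subseteq\varphi_x(K)$ is unbounded, which is the desired contradiction, and therefore $K\subseteq G_m$ for some $m$, equivalently $g(O_m)\subseteq\{0\}$ for all $g\in K$. I expect essentially all the work to lie in this bookkeeping — arranging the blocks $(n_{r-1},n_r]$ so that the hump inserted for $h_r$ dominates the frozen contribution $S_r$ and is then left untouched by later stages — while the remaining ingredients are just continuity of $\varphi_x$, compactness of $K$, and the Specker description of elements of $G$.
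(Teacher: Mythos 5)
Your proof is correct, but it follows a genuinely different route from the paper's. The paper argues topologically: it restricts the separately continuous evaluation map $e\colon \B\times G\to\Z$ to $\B\times K$, invokes Namioka's theorem (using that $\B$ is Polish and $K$ is compact) to obtain a point $x_0$ of joint continuity, covers $K$ by finitely many neighbourhoods on which the evaluation is constant, and then uses the homomorphism property of each $g\in K$ to convert the resulting basic neighbourhood of $x_0$ into the subgroup $O_m$. You instead lean on the algebraic description of $G$ from Fact~\ref{G:is:free}: after the (correct) observation that $g(O_m)\subseteq\{0\}$ if and only if $g$ lies in the span $G_m=\grp{g_1,\dots,g_m}$ of the first $m$ projections, you run a gliding-hump construction producing a single $x\in\B$ on which a hypothetical $K$ escaping every $G_m$ takes unbounded values, contradicting the continuity of $\varphi_x$ (Claim~\ref{continuity:of:evaluation}) together with the compactness of $K$. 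Your bookkeeping is sound: $h_r$ depends only on the coordinates $1,\dots,n_r$, later stages touch only coordinates beyond $n_r$, and the hump at $n_r$ dominates the frozen sum $S_r$, so $|h_r(x)|>r$ for all $r$. Your route is more elementary (no Namioka) and in fact yields a formally stronger conclusion --- any pointwise bounded subset of $G$, not just any compact one, is contained in some $G_m$ --- but it relies essentially on Specker's theorem that the projections freely generate $G$, whereas the paper's argument uses Specker's theorem only through the continuity of each $g\in G$, for which an independent proof is cited. Both proofs are complete.
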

\begin{proof}
Let $K$ be a non-empty compact subset of $G$. Let $\varepsilon:\B\times K\to \Z$
be the restriction of the evaluation map $e$ to the subset $\B\times K$ of 
$\B\times G$. 
Then $\varepsilon$ is separately continuous by Claim
\ref{separate:continuity}.
Since $\B=\BS$ is a complete separable metric space and $K$ is compact, we can apply Namioka's theorem \cite{Namioka}
to find a dense $G_\delta$-subset $D$ of $\B$ such that
$\varepsilon$ is jointly continuous at every point of the set $D\times K$. 
Since $D\not=\emptyset$, we can choose $x_0\in D$.

For every $g\in K$, we can use the continuity of $\varepsilon$ at $(x_0,g)$ 
and the discreteness of $\Z$ to 
fix an open neighbourhood $U_g$ of $x_0$ in $\B$ and an open neighbourhood $V_g$ of $g$ in $K$ such that
$\varepsilon(U_g\times V_g)=\{\varepsilon(x_0,g)\}$.
Since $K$ is compact, there exist $g_1,g_2,\dots,g_k\in G$ such that
$K\subseteq \bigcup_{i=1}^k V_{g_i}$.
Now $U= \bigcap_{i=1}^k U_{g_i}$ is an open neighbourhood of $x_0$ in $\B$ such that
\begin{equation}
\label{eq:1}
g(x)=\varepsilon(x,g)=\varepsilon(x_0,g)=g(x_0)
\ 
\mbox{ for all }
\ 
x\in U
\ 
\mbox{ and each }
\ 
g\in K.
\end{equation}

Since $U$ is an open neighbourhood of $x_0$ in $\B=\BS$, there exists $m\in\N$
such that
\begin{equation}
\label{eq:2}
W=\{x\in \B: x(i)=x_0(i) \mbox{ for all } i=1,2,\dots,m\}\subseteq U.
\end{equation}

Let $x\in O_m$ and $g\in K$ be arbitrary. 
Define $y_0\in \B$ by 
$$
y_0(i)=\left\{\begin{array}{ll}
x_0(i) & \mbox{if $i=1,2,\dots,m$}\\
0 & \mbox{if $i>m$}
\end{array}
\right.
\ \ \
\mbox{for all}
\ \
i\in\N.
$$
Since $y_0\in W$ and $x+y_0\in W$, from
\eqref{eq:1} and \eqref{eq:2}
we obtain
$g(y_0)=g(x_0)$ and  
$g(x+y_0)=g(x_0)$.
Since $g$ is a homomorphism, $g(x+y_0)=g(x)+g(y_0)$. This gives $g(x)=0$.
We have proved that $g(O_m)\subseteq \{0\}$ for all $g\in K$.
\end{proof}

For each $n\in\N$, let $\delta_n\in X$ be the function defined 
by 
\begin{equation}
\label{eq:delta}
\delta_n(i)=\left\{\begin{array}{ll}
1 & \mbox{if $i=n$}\\
0 & \mbox{if $i\not=n$}
\end{array}
\right.
\ \ \
\mbox{for all}
\ \
i\in\N.
\end{equation}
Define 
\begin{equation}
\label{eq:psi:n}
\psi_n=\varphi_{\delta_n}\in \Hom(G,\Z).
\end{equation}
\begin{claim}
\label{claim:6}
For every non-empty compact subset $K$ of $G$, there exists $m\in\N$ (depending on $K$) such that
$\psi_n(K)= \{0\}$ for all $n\in\N$ with $n>m$.
\end{claim}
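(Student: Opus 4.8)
The plan is to derive this claim directly from Claim \ref{claim:a}, which already localizes a compact subset of $G$ away from the tail subgroups $O_m$. First I would apply Claim \ref{claim:a} to the non-empty compact subset $K$ of $G$ to obtain an integer $m\in\N$ such that $g(O_m)\subseteq\{0\}$ for every $g\in K$, with $O_m$ as in \eqref{eq:O_m}.

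Next I would observe that whenever $n\in\N$ satisfies $n>m$, the element $\delta_n$ defined in \eqref{eq:delta} belongs to $O_m$; indeed, $\delta_n(i)=0$ for every $i\in\{1,2,\dots,m\}$ precisely because $n>m$. Consequently, for each $g\in K$ we obtain
$$
\psi_n(g)=\varphi_{\delta_n}(g)=g(\delta_n)=0
$$
by \eqref{eq:psi:n}, the definition of $\varphi_{\delta_n}$ from Claim \ref{continuity:of:evaluation}, and the fact that $\delta_n\in O_m$. This yields $\psi_n(K)=\{0\}$ for all $n>m$, which is exactly the assertion of the claim.

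I do not anticipate any genuine obstacle at this step: the substantive content is entirely contained in Claim \ref{claim:a}, whose proof (via Namioka's joint continuity theorem, using the completeness and separability of $\B=\BS$ and the separate continuity of the evaluation map established in Claim \ref{separate:continuity}) is where the real work lies. The present claim simply rephrases that statement in the coordinate language of the evaluation functionals $\psi_n$, which is the form that will be convenient for the remainder of the proof of Theorem \ref{main:theorem}.
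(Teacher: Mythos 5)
Your proposal is correct and follows exactly the same route as the paper: invoke Claim \ref{claim:a} to get $m$, note that $\delta_n\in O_m$ for $n>m$ by \eqref{eq:O_m} and \eqref{eq:delta}, and conclude $\psi_n(g)=g(\delta_n)=0$ for all $g\in K$. No issues.
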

\begin{proof}
Let $m\in\N$ be as in the conclusion of Claim \ref{claim:a}.
If $n\in\N$ and $n>m$, then $\delta_n\in O_m$ by \eqref{eq:O_m} and \eqref{eq:delta}, so $\psi_n(g)=\varphi_{\delta_n}(g)=g(\delta_n)=0$ for all $g\in K$;
that is, $\psi_n(K)=\{0\}$.
\end{proof}

Let $\psi\in\Hom(G,\Z^\N)$ be the diagonal product of the family $\{\psi_n:n\in\N\}\subseteq \Hom(G,\Z)$ defined by 
$\psi(g)(n)=\psi_n(g)$ for all $n\in\N$ and $g\in G$; that is,
the $n$th coordinate of $\psi(g)\in\Z^\N$ is $\psi_n(g)$.

\begin{claim}
\label{psi:is:continuous}
$\psi$ is continuous.
\end{claim}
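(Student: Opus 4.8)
The plan is to invoke the universal property of the Tychonoff product topology on $\Z^\N$: a map into a product is continuous precisely when each of its coordinate maps is continuous. So I would first recall that for each $n\in\N$ the $n$th coordinate of $\psi(g)$ is, by definition, $\psi_n(g)$; hence $p_n\circ\psi=\psi_n$, where $p_n:\Z^\N\to\Z$ denotes the projection onto the $n$th coordinate.

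Next I would observe that each $\psi_n$ is continuous. Indeed, by \eqref{eq:psi:n} we have $\psi_n=\varphi_{\delta_n}$, and $\varphi_{\delta_n}\in\Hom(G,\Z)$ is continuous by Claim \ref{continuity:of:evaluation}, since $G$ carries the topology of pointwise convergence on $\B$. Therefore every composition $p_n\circ\psi$ is continuous.

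Finally, since $\Z^\N$ is equipped with the Tychonoff product topology, the continuity of all the maps $p_n\circ\psi$ forces $\psi$ itself to be continuous; this is the standard fact that the diagonal product of a family of continuous maps into the respective factors is continuous into the product. There is no genuine obstacle in this step: it is a direct and routine application of the definition of the product topology, and it requires nothing beyond Claim \ref{continuity:of:evaluation}.
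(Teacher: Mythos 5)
Your proposal is correct and follows exactly the paper's own argument: each coordinate map $\psi_n=\varphi_{\delta_n}$ is continuous by \eqref{eq:psi:n} and Claim \ref{continuity:of:evaluation}, and the diagonal product of continuous maps into a Tychonoff product is continuous. Nothing is missing.
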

\begin{proof}
Each $\psi_n$ is continuous by \eqref{eq:psi:n} and Claim \ref{continuity:of:evaluation}.
It remains only to recall that the diagonal product of continuous maps is continuous.
\end{proof}

\begin{claim}
\label{psi:is:a:monomorphism}
$\psi$ is a monomorphism.
\end{claim}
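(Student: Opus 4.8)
The plan is simply to show that $\ker\psi=\{0\}$. Unwinding the definitions, for $g\in G$ and $n\in\N$ one has $\psi(g)(n)=\psi_n(g)=\varphi_{\delta_n}(g)=g(\delta_n)$, so $g\in\ker\psi$ if and only if $g(\delta_n)=0$ for every $n\in\N$. Hence everything reduces to the following assertion: a homomorphism $g\colon\BS\to\Z$ that vanishes at each of the points $\delta_n$ (defined in \eqref{eq:delta}) must be the zero homomorphism.

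To prove this I would invoke Fact \ref{G:is:free}. Since $\{g_n:n\in\N\}$ is a basis of the free abelian group $G$, we may write $g=\sum_{n}a_n g_n$ with integer coefficients $a_n$, all but finitely many of which are $0$. From $g_n(\delta_m)=\delta_m(n)$, which equals $1$ when $m=n$ and $0$ otherwise, we get $g(\delta_m)=\sum_n a_n g_n(\delta_m)=a_m$ for every $m\in\N$. Therefore $g(\delta_m)=0$ for all $m$ forces $a_m=0$ for all $m$, i.e.\ $g=0$, as required. Alternatively, one could use Fact \ref{claim:Specker}~(i): write $g=g'\circ\pi_N$ with $g'\in\Hom(\Z^N,\Z)$; then $g(\delta_i)=g'(e_i)$ for $1\le i\le N$ (where $e_i$ denotes the $i$th standard generator of $\Z^N$) and $g(\delta_i)=0$ for $i>N$, so the hypothesis yields $g'(e_i)=0$ for all $i\le N$, hence $g'=0$ and $g=0$.

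I do not anticipate any real difficulty here; the computation is routine. The only point deserving care is that $\psi$ sends the $n$th free generator $g_n$ of $G$ to the element $\delta_n$ of $\Z^\N$ (which happens to coincide with the point $\delta_n\in\B$ used earlier), and this is exactly the identity $\psi(g_n)(m)=\delta_m(n)$ established above. Together with Claim \ref{psi:is:continuous}, this makes $\psi$ a continuous injective homomorphism of $G$ into $\Z^\N$, which is what the subsequent argument will exploit.
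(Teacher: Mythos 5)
Your proof is correct, but it takes a genuinely different route from the paper. The paper's argument is topological: from $g(\delta_n)=0$ for all $n$ it concludes that $g$ vanishes on the subgroup $Y$ generated by $\{\delta_n:n\in\N\}$, observes that $Y$ is dense in $\B=\BS$, and then uses the continuity of $g$ (Fact \ref{aumatic:continuity}~(ii)) together with the discreteness of $\Z$ to conclude $g=0$. You instead argue algebraically, expanding $g$ in the free basis $\{g_n\}$ of Fact \ref{G:is:free} and reading off the coefficients as $a_m=g(\delta_m)$ (or, in your alternative, factoring $g=g'\circ\pi_N$ via Fact \ref{claim:Specker}~(i) and killing $g'$ on the standard generators of $\Z^N$). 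Both arguments are sound and both ultimately rest on Specker's theorem, since Fact \ref{G:is:free} and the continuity of $g$ are each derived from Fact \ref{claim:Specker}~(i); your version makes no use of the topology of $\B$ and is in that sense more elementary, while the paper's version is a one-line deduction once continuity is in hand. Your closing observation that $\psi(g_n)=\delta_n\in\Z^\N$ is a correct and slightly finer statement than injectivity (it identifies $\psi$ as an embedding of the free basis onto the standard "basis" of the direct sum inside $\Z^\N$), though it is not needed for the claim itself.
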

\begin{proof}
Assume that $g\in G$ and $\psi(g)=0$. Then $g(\delta_n)=\psi_n(g)=\psi(g)(n)=0$
for all $n\in \N$. Since $g$ is a homomorphism from $\B$ to $\Z$, this implies 
$g(Y)=\{0\}$, where $Y$ is the subgroup of $\B$ generated by the set $\{\delta_n:n\in\N\}$. Since $Y$ is dense in $\B$ and $g$ is continuous by Fact
\ref{aumatic:continuity}~(ii), it follows that $g(X)=\{0\}$; that is, $g=0$.
We proved that $\psi$ has a trivial kernel, so it is a monomorphism.
\end{proof}

\begin{claim}
All compact subsets of $G$ are finite.
\end{claim}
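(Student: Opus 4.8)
The plan is simply to harvest the three properties of the homomorphism $\psi\colon G\to\Z^\N$ collected above. Let $K$ be a non-empty compact subset of $G$ (the empty set being trivially finite). Since $\psi$ is continuous by Claim \ref{psi:is:continuous}, the image $\psi(K)$ is a compact subset of $\Z^\N$.

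Next I would invoke Claim \ref{claim:6} to fix $m\in\N$ with $\psi_n(g)=0$ for every $g\in K$ and every $n>m$. By the definition of $\psi$ this says precisely that $\psi(K)$ is contained in the subgroup
\[
Z_m=\{z\in\Z^\N: z(n)=0\ \text{for all}\ n>m\}
\]
of $\Z^\N$. The point is that the subspace topology that $Z_m$ inherits from $\Z^\N$ is discrete: the map $z\mapsto(z(1),\dots,z(m))$ identifies $Z_m$ with the finite power $\Z^m$, whose product topology is discrete since $\Z$ is. Hence $\psi(K)$ is a compact subset of a discrete space, and therefore finite.

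Finally, $\psi$ is a monomorphism by Claim \ref{psi:is:a:monomorphism}, hence injective as a map of sets, so $|K|=|\psi(K)|<\aleph_0$; that is, $K$ is finite. This proves the claim and, with it, Theorem \ref{main:theorem}. I do not foresee any genuine obstacle at this stage: the substantive work has already been carried out in Claims \ref{continuity:of:evaluation}--\ref{psi:is:a:monomorphism}, the hard heart of the argument being the appeal to Namioka's joint-continuity theorem in the proof of Claim \ref{claim:a}; what remains here is only the elementary observation that a compact subset of a discrete space is finite, together with the bookkeeping that $\psi$ is continuous, injective, and collapses all but finitely many of the coordinate-evaluations on $K$.
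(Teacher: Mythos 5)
Your proof is correct and follows exactly the same route as the paper's: apply Claim \ref{claim:6} to confine $\psi(K)$ to the discrete subgroup $\Z^m\times\{0\}\times\{0\}\times\dots$ of $\Z^\N$, conclude that the compact set $\psi(K)$ is finite, and then use the injectivity of $\psi$ from Claim \ref{psi:is:a:monomorphism} to transfer finiteness back to $K$. No gaps.
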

\begin{proof}
Let $K$ be a non-empty compact subset of $G$. Choose $m$ as in the conclusion of Claim \ref{claim:6}.
Then 
$$
\psi(K)\subseteq \Z^m\times\{0\}\times\{0\}\times\dots\times\{0\}\times\dots,
$$
so $\psi(K)$ is discrete. On the other hand, $\psi$ is continuous by Claim \ref{psi:is:continuous}, and so $\psi(K)$ is compact. Therefore, $\psi(K)$ is finite.
Since $\psi$ is a one-to-one map by Claim \ref{psi:is:a:monomorphism}, it follows that $K$ is finite as well.
\end{proof}

\end{document}